\newtheorem{theorem}{Theorem}[section]
\newtheorem{lemma}[theorem]{Lemma}
\theoremstyle{definition}
\newtheorem{definition}[theorem]{Definition}
\theoremstyle{remark}
\newtheorem{remark}[theorem]{Remark}
\DeclareMathOperator{\Hom}{Hom}
\DeclareMathOperator{\RHom}{RHom}
\DeclareMathOperator{\Coker}{Coker}
\DeclareMathOperator{\Tor}{Tor}
\DeclareMathOperator{\Ext}{Ext}
\DeclareMathOperator{\Ob}{Ob}
\newcommand*{\dL}{\mathrm{L}}
\newcommand*{\dD}{\mathrm{D}}
\newcommand*{\lmod}{\mbox{-}\!\mathop{\mathsf{mod}}}
\newcommand*{\rmod}{\mathop{\mathsf{mod}}\!\mbox{-}}
\newcommand*{\bimod}{\mbox{-}\!\mathop{\mathsf{mod}}\!\mbox{-}}
\newcommand*{\LCS}{\mathsf{LCS}}
\newcommand*{\Vect}{\mathsf{Vect}}
\newcommand*{\Ptens}{\mathop{\widehat\otimes}}
\newcommand*{\ptens}[1]{\mathop{\widehat\otimes}_{#1}}
\newcommand*{\Lptens}[1]{\mathop{\widehat\otimes}_{#1}\nolimits^{\dL}}
\newcommand*{\holim}{\mathop{\underaccent{\longleftarrow}{\mathrm{holim}}}}
\newcommand*{\id}{1}
\newcommand*{\N}{\mathbb N}
\newcommand*{\CC}{\mathbb C}
\newcommand*{\Z}{\mathbb Z}
\newcommand*{\cA}{\mathscr A}
\newcommand*{\cB}{\mathscr B}
\newcommand*{\cT}{\mathcal T}
\newcommand*{\bC}{\mathsf C}
\newcommand*{\bH}{\mathsf H}
\newcommand*{\bD}{\mathsf D}
\newcommand*{\op}{\mathrm{op}}
\newcommand*{\eps}{\varepsilon}
\newenvironment{mycompactenum}{\pltopsep=5pt\begin{compactenum}[\upshape (i)]}%
{\end{compactenum}}
\newcommand*{\xra}{\xrightarrow}
\def\varholim@#1#2{%
  \vtop{\m@th\ialign{##\cr
    \hfil$#1\operator@font holim$\hfil\cr
    \noalign{\nointerlineskip\kern1.5\ex@}#2\cr
    \noalign{\nointerlineskip\kern-\ex@}\cr}}%
}
\def\hocolim{%
  \mathop{\mathpalette\varholim@{\rightarrowfill@\textstyle}}\nmlimits@
}
\def\holim{%
  \mathop{\mathpalette\varholim@{\leftarrowfill@\textstyle}}\nmlimits@
}
\numberwithin{equation}{section}
\begin{document}
\title[Relative homological epimorphisms]{A note on relative homological epimorphisms\\[2pt]
of topological algebras}
\subjclass[2010]{46M15, 46M18, 16E30, 16E35, 46H99}
\author{A. Yu. Pirkovskii}
\address{Alexei Yu. Pirkovskii, Faculty of Mathematics,
HSE University,
6 Usacheva, 119048 Moscow, Russia}
\email{aupirkovskii@hse.ru}
\thanks{This work was supported by the RFBR grant no. 19-01-00447.}
\date{}

\begin{abstract}
We give several characterizations of relative homological epimorphisms in the setting of
locally convex topological algebras, thereby correcting
a gap in our earlier paper [Trans. Moscow Math. Soc. 2008, 27--104].
\end{abstract}

\maketitle

\section{Introduction}

Homological epimorphisms of topological algebras
were introduced by J.~L.~Taylor \cite{T2} under the name of {\em absolute localizations}.
Since then, they were rediscovered several times under different names
(see \cite{Dicks,GL,NR,Meyer,BBK}), both in the purely algebraic and in the functional analytic
contexts. A more detailed historical survey is given in \cite[Remark 3.16]{AP}.

In \cite{Pir_qfree}, we introduced a relative version of this notion, which roughly
corresponds to the situation where homological properties of topological $A$-modules are
considered relative to a subalgebra $R$ of $A$. In other words, we worked in the
functional analytic version of G.~Hochschild's relative homological algebra \cite{Hoch_rel}.
Our main motivation was that, in order to prove that an algebra homomorphism
$\varphi\colon A\to B$ is a homological epimorphism, it is often convenient
(and it often suffices, provided that some natural conditions are satisfied) to construct finite chains
$R_0\subset R_1\subset\cdots\subset R_n=A$ and
$S_0\subset S_1\subset\cdots\subset S_n=B$ of subalgebras such that $\varphi(R_i)\subset S_i$
for all $i$, and such that $\varphi|_{R_i}\colon R_i\to S_i$ is a homological epimorphism
relative to $R_{i-1}\subset R_i$ and $S_{i-1}\subset S_i$, respectively.
This method applies, in particular, to iterated Ore extensions and to their functional
analytic counterparts (see \cite[Section 9]{Pir_qfree}).

The goal of this note is to correct a gap in \cite[Theorem 6.3]{Pir_qfree}, where
several characterizations of relative homological epimorphisms were given.
For the non-relative case, such characterizations are known in the purely algebraic setting
\cite{GL,Pauk,NS} and in the setting of bornological algebras \cite{Meyer}.
However, the setting of locally convex topological algebras requires some different tools,
mostly because there is no internal $\Hom$ functor in the category of complete locally
convex spaces, and so the adjunctions between $\Hom$ and $\otimes$ are not available
in the context of complete locally convex modules.

\section{Preliminaries}

Throughout, all vector spaces and algebras are assumed to be over the field $\CC$
of complex numbers. All algebras are assumed to be associative and unital.
By a {\em $\Ptens$-algebra} we mean an algebra $A$ endowed with
a complete locally convex topology in such a way that the multiplication
$A\times A\to A$ is jointly continuous.
Note that the multiplication uniquely extends to a continuous linear map
$A\Ptens A\to A$, $a\otimes b\mapsto ab$,
where the symbol $\Ptens$ stands for the completed projective
tensor product (whence the name ``$\Ptens$-algebra'').
If $A$ is a $\Ptens$-algebra, then a {\em left $A$-$\Ptens$-module}
is a left $A$-module $X$ endowed with
a complete locally convex topology in such a way that
the action $A\times X\to X$ is jointly continuous.
We always assume that $1_A\cdot x=x$ for all $x\in X$, where $1_A$ is the identity of $A$.
Left $A$-$\Ptens$-modules and their continuous morphisms form a category
denoted by $A\lmod$.
The categories $\rmod A$ and $A\bimod B$ of right
$A$-$\Ptens$-modules and of $A$-$B$-$\Ptens$-bimodules are defined similarly.
Given left $A$-$\Ptens$-modules $X$ and $Y$, we denote by $\Hom_A(X,Y)$
the space of morphisms from $X$ to $Y$.
If $X$ is a right $A$-$\Ptens$-module and $Y$
is a left $A$-$\Ptens$-module, then their {\em $A$-module tensor product}
$X\ptens{A}Y$ is defined to be
the completion of the quotient $(X\Ptens Y)/L$, where $L\subset X\Ptens Y$
is the closed linear span of all elements of the form
$x\cdot a\otimes y-x\otimes a\cdot y$
($x\in X$, $y\in Y$, $a\in A$).
As in pure algebra, the $A$-module tensor product can be characterized
by the universal property that, for each complete locally convex space $E$,
there is a natural bijection between the set of all
continuous $A$-balanced bilinear maps from $X\times Y$ to $E$
and the set of all continuous linear maps from
$X\ptens{A}Y$ to $E$.

If $R$ is a $\Ptens$-algebra, then an {\em $R$-$\Ptens$-algebra} is a pair $(A,\eta_A)$
consisting of a $\Ptens$-algebra $A$ and a continuous algebra homomorphism
$\eta_A\colon R\to A$. When speaking of $R$-$\Ptens$-algebras we often abuse the language
by saying that $A$ is an $R$-$\Ptens$-algebra (without explicitly mentioning $\eta_A$);
this should not lead to confusion. If $A$ is an $R$-$\Ptens$-algebra and $B$
is an $S$-$\Ptens$-algebra, then an {\em $R$-$S$-homomorphism} from $A$ to $B$
is a pair $(f,g)$, where $f\colon A\to B$ and $g\colon R\to S$ are continuous algebra
homomorphisms such that $f\eta_A=\eta_B g$. Again, we often abuse the language by
saying that $f\colon A\to B$ is an $R$-$S$-homomorphism (without explicitly
mentioning $g$).

Let $A$ be an $R$-$\Ptens$-algebra. We say that a short sequence of left $A$-$\Ptens$-modules
is {\em $R$-admissible} if it is split exact in $R\lmod$.
In the special case where $R=\CC$, we recover the standard definition of an admissible
(or $\CC$-split) sequence used in the homology theory of topological algebras
(see the original papers \cite{X70,T1,KV} and the monographs \cite{X1,EschmPut}).
The category $A\lmod$ together with the class of all short $R$-admissible sequences
is an exact category (in Quillen's sense); we denote this exact category by $(A,R)\lmod$.
Therefore the derived categories $\bD^*((A,R)\lmod)$ are defined, where
$*\in\{ +,-,b\}$; see \cite{DCU,Buhler} for details. The exact categories
$\rmod (A,R)$, $(A,R)\bimod (B,S)$, and their derived categories are defined similarly.

As was observed in \cite[Section 2]{Pir_qfree}, $(A,R)\lmod$ has enough projectives.
Hence each additive covariant (respectively, contravariant) functor from
$(A,R)\lmod$ to any exact category has a total left (respectively, right) derived functor
(see, e.g., \cite{DCU,Buhler}). In particular, let $\Vect$ denote the abelian
category of vector spaces, and let $\LCS$ denote the quasi-abelian category
of locally convex spaces (see \cite{Schneiders,Prosmans} for details about the latter).
Then for each
$Y\in \Ob((A,R)\lmod)$ we have the right derived functor
\begin{equation}
\label{RHom}
\RHom_A(-,Y)\colon\bD^-((A,R)\lmod)\to\bD^+(\Vect),
\end{equation}
and for each $X\in\Ob(\rmod (A,R))$ we have the left derived functor
\begin{equation}
\label{Lptens}
X\Lptens{A}(-)\colon \bD^-((A,R)\lmod)\to\bD^-(\LCS).
\end{equation}
There are some obvious variations of $\RHom_A$ and $\Lptens{A}$.
For example, if $B$ is an $S$-$\Ptens$-algebra and $X\in\Ob((B,S)\bimod (A,R))$,
then $X\Lptens{A}(-)$ may be viewed as a functor with values in $\bD^-((B,S)\lmod)$.

Given $X,Y\in\Ob((A,R)\lmod)$ and $n\in\Z_+$, we let
$\Ext^n_{A,R}(X,Y)=H^n(\RHom_A(X,Y))$, where $\RHom_A$ is given by \eqref{RHom}
and $H^n$ is the $n$th cohomology functor. Similarly, given
$X\in\Ob(\rmod (A,R))$ and $Y\in\Ob((A,R)\lmod)$, we let
$\Tor_n^{A,R}(X,Y)=H^{-n}(X\Lptens{A} Y)$, where $\Lptens{A}$ is given by \eqref{Lptens}.
Thus $\Ext^n_{A,R}(X,Y)$ is a vector space, while $\Tor_n^{A,R}(X,Y)$ is a
(not necessarily Hausdorff) locally convex space. The completion of
$\Tor_0^{A,R}(X,Y)$ is naturally isomorphic to $X\ptens{A} Y$
\cite[Section 2]{Pir_qfree}.

Suppose that $A$ is an $R$-$\Ptens$-algebra, $B$ is an $S$-$\Ptens$-algebra,
and $f\colon A\to B$ is an $R$-$S$-homomorphism. The ``restriction of scalars''
functor $f^\bullet\colon (B,S)\lmod \to (A,R)\lmod$ is obviously exact
(i.e., it takes admissible sequences to admissible sequences).
Hence $f^\bullet$ extends to a triangle functor
$\dD f^\bullet\colon \bD^*((B,S)\lmod)\to\bD^*((A,R)\lmod)$.

We will also need the notion of a homotopy limit (see, e.g., \cite{BN,Neeman}).
Suppose that $\cT$ is a triangulated category with countable products,
and let $(X_n,f_{n+1}\colon X_{n+1}\to X_n)_{n\in\N}$
be an inverse system in $\cT$. Define
\[
s\colon\prod_{n\in\N} X_n \to \prod_{n\in\N} X_n,\quad
s(x_1,x_2,\ldots )=(f_2(x_2),f_3(x_3),\ldots).
\]
An object $X$ of $\cT$ is a {\em homotopy limit} of $(X_n,f_n)$
if there exists a distinguished triangle
\[
X \to \prod_{n\in\N} X_n \xra{\id-s} \prod_{n\in\N} X_n \to X[1].
\]
In this case, we write $X=\holim X_n$. The homotopy limit exists and is
unique up to a noncanonical isomorphism.

Finally, recall that, if $\cA$ and $\cB$ are exact categories and
$F\colon\cA\to\cB$ is a covariant additive functor, then an object
$X$ of $\cA$ is (left) {\em $F$-acyclic} \cite[Section 15]{DCU} if the left derived functor $\dL F$
is defined at $X$ and if the canonical morphism $\dL F(X)\to F(X)$
is an isomorphism in $\bD(\cB)$.

\section{The results}

\begin{lemma}
\label{lemma:split_prod}
Let $\cA$ be an additive category with countable products, and let $f\colon X\to Y$
be a morphism in $\cA$. Define $i\colon X\to Y\times X^\N$ and
$t\colon Y\times X^\N \to Y\times X^\N$ by
\begin{align*}
i(x)&=(f(x),x,x,\ldots),\\
t(y,x_1,x_2,\dots)&=(y-f(x_1),x_1-x_2,x_2-x_3,\ldots).
\end{align*}
Then the sequence
\begin{equation}
\label{split_prod}
0 \to X \xra{i} Y\times X^\N \xra{t} Y\times X^\N \to 0
\end{equation}
is split exact in $\cA$.
\end{lemma}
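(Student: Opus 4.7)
The plan is to establish split exactness by writing down explicit retraction and section maps and then verifying the defining identities of a split short exact sequence.

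First, I would define the retraction $r\colon Y\times X^{\N}\to X$ as the projection onto the first $X$-coordinate, i.e.\ $r(y,x_1,x_2,\ldots)=x_1$. An immediate check gives $r\circ i=\id_X$, which shows $i$ is a split monomorphism and, in particular, $i$ equals the kernel of the canonical biproduct projection onto $Y\times X^{\N}$ in its would-be direct sum decomposition.

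Next, the main ingredient: a telescoping section $s\colon Y\times X^{\N}\to Y\times X^{\N}$ of $t$, defined by
\[
s(y,x_1,x_2,x_3,\ldots)=\bigl(y,\,0,\,-x_1,\,-(x_1+x_2),\,-(x_1+x_2+x_3),\ldots\bigr),
\]
that is, the $Y$-component is $y$ and the $k$-th $X$-component (for $k\geq 1$) is $-\sum_{j=1}^{k-1}x_j$, with the empty sum in position $k=1$. The point worth flagging is that, although the target is an infinite product, each coordinate of $s$ involves only finitely many terms, so $s$ is a legitimate morphism in an arbitrary additive category with countable products; no convergence of infinite sums is required. A direct coordinatewise calculation yields $t\circ s=\id$.

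Finally, to conclude split exactness in the sense of additive categories, I would verify the identity $i\circ r+s\circ t=\id_{Y\times X^{\N}}$ by computing the two summands on an arbitrary element $(y,x_1,x_2,\ldots)$: the first gives $(f(x_1),x_1,x_1,\ldots)$, the second gives $(y-f(x_1),0,x_2-x_1,x_3-x_1,\ldots)$, and their sum is $(y,x_1,x_2,\ldots)$. Together with $r\circ i=\id_X$ and $t\circ s=\id$, this exhibits \eqref{split_prod} as the standard split short exact sequence associated with the biproduct decomposition $Y\times X^{\N}\cong X\oplus(Y\times X^{\N})$ and completes the proof. No step is genuinely difficult; the only thing to be careful about is the telescoping definition of $s$, which is why I would write out that coordinate formula explicitly rather than leave it implicit.
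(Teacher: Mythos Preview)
Your proof is correct and essentially identical to the paper's: your maps $r$ and $s$ are exactly the paper's $p$ and $q$, and the three identities you verify ($ri=\id$, $ts=\id$, $ir+st=\id$) are precisely what the paper checks. You in fact spell out the telescoping section and the verification of $ir+st=\id$ more explicitly than the paper does, and your remark that each coordinate of $s$ is a finite sum (so no additional hypotheses on $\cA$ are needed) is a useful clarification.
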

\begin{proof}
Define $p\colon Y\times X^\N \to X$ and $q\colon Y\times X^\N \to Y\times X^\N$ by
\begin{align*}
p(y,x_1,x_2,\ldots)&=x_1,\\
q(y,x_1,x_2,\dots)&=\Bigl(y,0,-x_1,-(x_1+x_2),\ldots ,-\sum_{i=1}^{k-1} x_i,\ldots\Bigr).
\end{align*}
An elementary calculation shows that $pi=\id$, $tq=\id$, and $ip+qt=\id$.
\end{proof}

Let $\cA$ be an exact category with cokernels,
and let $X=(X^i, d^i\colon X^i\to X^{i+1})$ be a cochain complex in $\cA$.
For each $n\in\N$ let $\beta_n X$ denote the bounded complex
\[
0 \to \Coker d^{-n-1} \to X^{-n+1} \to \ldots \to X^{n-1} \to X^n \to 0
\]
concentrated in degrees from $-n$ to $n$. We have the obvious morphisms
$\beta_{n+1} X\to \beta_n X$, so $(\beta_n X)$ is an inverse system
in $\bC(\cA)$ and hence in $\bD(\cA)$. We clearly have $X=\varprojlim\beta_n X$ in $\bC(\cA)$.

Given $\cA$ as above, we say that $\cA$ {\em has exact countable products} if
countable products exist in $\cA$, and if the product of every countable family of
short admissible sequences in $\cA$ is admissible.

The following result is essentially contained in \cite[Prop. 6]{Meyer}.

\begin{lemma}
\label{lemma:holim}
If $\cA$ has exact countable products, then $X=\holim\beta_n X$ in $\bD(\cA)$.
\end{lemma}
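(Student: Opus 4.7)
The plan is to realize the defining triangle of $\holim \beta_n X$ as coming from a short exact sequence that lives already in the complex category $\bC(\cA)$. Concretely, with $f_{n+1}\colon\beta_{n+1}X\to\beta_n X$ the obvious structure morphisms and $s$ the associated shift on $\prod_n\beta_n X$, I would exhibit an admissible short exact sequence
\[
0 \to X \xra{\iota} \prod_{n\in\N}\beta_n X \xra{\id-s} \prod_{n\in\N}\beta_n X \to 0
\]
in $\bC(\cA)$, where $\iota$ comes from the canonical cone $X\to\beta_n X$ for each $n$ (so that $X=\varprojlim\beta_n X$ in $\bC(\cA)$). Exact countable products in $\cA$ ensure, on the one hand, that the degree-wise product in $\bC(\cA)$ computes the product in $\bD(\cA)$ (the product of acyclic complexes is acyclic, so the product functor descends to $\bD(\cA)$), and on the other hand, that an admissible short exact sequence in $\bC(\cA)$ gives a distinguished triangle in $\bD(\cA)$. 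Rotating that triangle reads off $X=\holim \beta_n X$.

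To establish admissibility, I would argue degree-wise. Fix $i\in\Z$ and examine the tower $((\beta_n X)^i,f_{n+1}^i)_{n\in\N}$. By construction of $\beta_n X$, the object $(\beta_n X)^i$ equals $0$ for $n$ smaller than $|i|$; equals $\Coker d^{-n-1}$ when $i=-n$; and equals $X^i$ with $f^i_{n+1}=\id$ for $n$ large. Thus, discarding the (finitely many) initial zero terms from $\prod_n(\beta_n X)^i$, the tower assumes the shape $Y\leftarrow X^i\leftarrow X^i\leftarrow\cdots$ of Lemma~\ref{lemma:split_prod}, for $Y=\Coker d^{i-1}$ if $i\le 0$ and $Y=X^i$ (with $f=\id$) if $i>0$. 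Lemma~\ref{lemma:split_prod} then yields a splitting of
\[
0 \to X^i \to \prod_n(\beta_n X)^i \xra{\id-s_i} \prod_n(\beta_n X)^i \to 0
\]
in $\cA$. Doing this in every degree shows that the sequence of complexes above is degree-wise split exact, hence admissible in $\bC(\cA)$ with its inherited exact structure.

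The step I expect to be fussiest is the bookkeeping in the reduction to Lemma~\ref{lemma:split_prod}: I need to verify that, after truncating the zero initial terms from the product, the restriction of the shift $s$ in degree $i$ really coincides (up to relabeling the index) with the map $t$ of the lemma, so that the splittings $p,q$ transport back to splittings of the full product. This is routine but worth checking because $s$ acts on the entire product and in principle could couple the "transient" and "stable" parts of the tower; the point is that the only structure maps from the stable part back into the transient part land in zero, so no such coupling occurs. Once this check is in place, the rest of the proof is a direct translation.
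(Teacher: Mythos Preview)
Your proposal is correct and follows essentially the same route as the paper: both arguments construct the triangle by showing the sequence $0\to X\to\prod_n\beta_nX\xra{1-s}\prod_n\beta_nX\to 0$ is degree-wise split exact via Lemma~\ref{lemma:split_prod}, after noting that exactness of countable products makes the componentwise product in $\bC(\cA)$ compute the product in $\bD(\cA)$. The only discrepancy is a harmless boundary slip: for $i=0$ the tower is $X^0\leftarrow X^0\leftarrow\cdots$ (no $\Coker$ term appears, since $n\ge 1$), so $i=0$ belongs with the case $Y=X^i$, $f=\id$ (or, as the paper phrases it, $Y=0$ for all $k\ge 0$); your stated choice $Y=\Coker d^{-1}$ at $i=0$ is incorrect but inconsequential once the case split is adjusted to $i<0$ versus $i\ge 0$.
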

\begin{proof}
Since countable products are exact in $\cA$, it follows that the
functor $\bC(\cA)\to\bD(\cA)$ preserves countable products \cite[Lemma 1.5]{BN}.
In particular, the product complex
$\prod_n \beta_n X$ taken in $\bC(\cA)$ (i.e., componentwise) is also a product
of the objects $(\beta_n X)$ in $\bD(\cA)$. To construct a triangle
\begin{equation}
\label{tri_beta}
X \to \prod_n \beta_n X \xra{1-s} \prod_n\beta_n X \to X[1]
\end{equation}
in $\bD(\cA)$, it suffices to find a sequence
$X \xra{i} \prod_n \beta_n X \xra{1-s} \prod_n\beta_n X$
in $\bC(\cA)$ such that, for each $k\in\Z$, the sequence
\begin{equation}
\label{level_k}
0 \to X^k \xra{i^k} \prod_n (\beta_n X)^k \xra{(1-s)^k} \prod_n (\beta_n X)^k \to 0
\end{equation}
is admissible \cite[Section 11]{DCU}. The obvious maps $X\to\beta_n X$ yield
$i\colon X\to\prod_n\beta_n X$. Now observe that \eqref{level_k} is a special case
of \eqref{split_prod} (for $k\ge 0$ we let $Y=0$, while for $k<0$ we let
$Y=\Coker d^{k-1}$ and $f\colon X^k\to\Coker d^{k-1}$ be the canonical map).
Applying Lemma~\ref{lemma:split_prod}, we see that \eqref{level_k} is split exact
and is {\em a fortiori} admissible in $\cA$.
\end{proof}

\begin{remark}
Since \eqref{level_k} is not only admissible, but is also split exact in $\cA$, it follows from
\cite[Section 6]{DCU} that \eqref{tri_beta} is a triangle not only in $\bD(\cA)$, but
also in the homotopy category $\bH(\cA)$. Thus $X=\holim\beta_n X$ in $\bH(\cA)$
as well (cf. \cite[Prop. 6]{Meyer}).
\end{remark}

\begin{lemma}
\label{lemma:semi_Yoneda}
Let $\cA$ be an exact category with cokernels and with exact countable products,
and let $X\to Y$ be a morphism in $\bD(\cA)$. Suppose that for each $Z\in\Ob(\bD^b(\cA))$
the induced map $\Hom_{\bD(\cA)}(Y,Z)\to \Hom_{\bD(\cA)}(X,Z)$ is a bijection.
Then $X\to Y$ is an isomorphism.
\end{lemma}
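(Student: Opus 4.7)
The plan is to complete $f\colon X\to Y$ to a distinguished triangle $X\xra{f} Y\to C\to X[1]$ in $\bD(\cA)$, after which the goal reduces to showing $C\cong 0$; this in turn will follow once we know $\Hom_{\bD(\cA)}(C,C)=0$, since that kills the identity of $C$. I plan to deduce first that $\Hom_{\bD(\cA)}(C,Z)=0$ for every $Z\in\Ob(\bD^b(\cA))$, and then to bootstrap to all $Z\in\Ob(\bD(\cA))$ via Lemma~\ref{lemma:holim}.

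For the bounded step, I would observe that $\bD^b(\cA)$ is stable under shifts, so applying the hypothesis to $Z[-n]$ shows that $\Hom(Y[n],Z)\to\Hom(X[n],Z)$ is bijective for every $n\in\Z$. Applying the cohomological functor $\Hom(-,Z)$ to the triangle above and chasing the resulting long exact sequence should then give $\Hom(C[n],Z)=0$ for every $n\in\Z$; in particular $\Hom(C,Z)=0$.

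To extend the vanishing to an arbitrary $Z\in\Ob(\bD(\cA))$, I would invoke Lemma~\ref{lemma:holim}, which yields a distinguished triangle
\[
Z\to\prod_n\beta_n Z\xra{\id-s}\prod_n\beta_n Z\to Z[1]
\]
in $\bD(\cA)$ with each $\beta_n Z\in\Ob(\bD^b(\cA))$. By the universal property of products in $\bD(\cA)$,
$\Hom(C,\prod_n\beta_n Z)=\prod_n\Hom(C,\beta_n Z)=0$ by the bounded step, and the same vanishing holds after replacing $Z$ by any shift $Z[k]$. The long exact sequence obtained by applying $\Hom(C,-)$ to the above triangle would then sandwich $\Hom(C,Z)$ between two zero groups, forcing $\Hom(C,Z)=0$. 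Finally, taking $Z=C$ gives $\id_C=0$, so $C\cong 0$ and $f$ is an isomorphism.

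The main obstacle I expect is the bootstrap from $\bD^b$ to $\bD$: everything hinges on being able to represent an arbitrary $Z$ as a homotopy limit of bounded complexes, which is exactly what Lemma~\ref{lemma:holim} provides, and whose use of exact countable products is precisely the additional hypothesis in the present lemma. The remainder is a standard Yoneda-style argument with cones and long exact sequences.
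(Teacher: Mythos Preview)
Your argument is correct. Both your proof and the paper's rest on the same key input, Lemma~\ref{lemma:holim}, to pass from bounded test objects to arbitrary ones; the difference lies in how the bootstrap is executed. The paper works directly with the morphism $X\to Y$: it invokes the Milnor $\varprojlim/\varprojlim^1$ short exact sequence for $\Hom$ into a homotopy limit (citing the Stacks Project) and then applies the five lemma to the resulting commutative diagram with exact rows. You instead pass to the cone $C$ and show $\Hom_{\bD(\cA)}(C,-)$ vanishes: first on bounded objects via the long exact sequence of the triangle $X\to Y\to C$, then on all objects by applying $\Hom(C,-)$ to the holim triangle itself, using that $\Hom(C,\prod_n\beta_n Z)=\prod_n\Hom(C,\beta_n Z)=0$ (and likewise for its shift). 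This is a genuine, if modest, simplification: because the relevant $\Hom$ groups vanish outright, you never need to identify the kernel and cokernel of $\id-s$ with $\varprojlim$ and $\varprojlim^1$, and so you avoid the external reference and the five lemma entirely. The paper's route, on the other hand, gives slightly more information in the intermediate step (it actually identifies $\Hom_{\bD(\cA)}(W,Z)$ as an extension of $\varprojlim$ by $\varprojlim^1$ terms for any $W$), which is of independent interest even though it is not needed for the lemma as stated.
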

\begin{proof}
By Yoneda's lemma, it suffices to show that
$\Hom_{\bD(\cA)}(Y,Z)\to \Hom_{\bD(\cA)}(X,Z)$ is a bijection for each $Z\in\Ob(\bD(\cA))$.
Applying Lemma~\ref{lemma:holim}, we see that $Z=\holim\beta_n Z$ in $\bD(\cA)$.
We have a commutative diagram of abelian groups
\[
\xymatrix@C-10pt{
0  \ar[r]  & \varprojlim^1 \Hom_{\bD(\cA)}(Y,\beta_n Z[-1]) \ar[r] \ar[d]
& \Hom_{\bD(\cA)}(Y,Z)  \ar[r] \ar[d]
& \varprojlim \Hom_{\bD(\cA)}(Y,\beta_n Z) \ar[r] \ar[d] & 0\\
0  \ar[r]  & \varprojlim^1 \Hom_{\bD(\cA)}(X,\beta_n Z[-1]) \ar[r]
& \Hom_{\bD(\cA)}(X,Z)  \ar[r]
& \varprojlim \Hom_{\bD(\cA)}(X,\beta_n Z) \ar[r] & 0
}
\]
with exact rows (see \cite[Lemma 15.85.3]{Stacks}).
Since the left and the right vertical arrows are isomorphisms,
so is the middle vertical arrow. This completes the proof.
\end{proof}

Our next lemma is a variation of a well-known fact of category theory
(see, e.g., \cite[IV.3, Theorem 1]{ML_work} or \cite[Prop. 3.4.1]{Borceux}).

\begin{lemma}
\label{lemma:emb}
Let $\cA$ and $\cB$ be categories, and let $F\colon\cA\to\cB$ and
$G\colon\cB\to\cA$ be an adjoint pair of functors, with $F$ left adjoint to $G$.
Let $\cB_0$ be a full subcategory of $\cB$, and let $G_0\colon\cB_0\to\cA$
denote the restriction of $G$ to $\cB_0$.
Suppose that $\cB_0$ has the following property:
\begin{mycompactenum}
\item[$\mathrm{(*)}$]
A morphism $X\to Y$ in $\cB$ is an isomorphism if and only if
the induced map $\Hom_{\cB}(Y,Z)\to\Hom_{\cB}(X,Z)$ is bijective for all $Z\in\Ob(\cB_0)$.
\end{mycompactenum}
Then the following conditions are equivalent:
\begin{mycompactenum}
\item
$G_0$ is fully faithful;
\item
for each $X\in\Ob(\cB_0)$, the canonical morphism
$\eps_X\colon FG(X)\to X$ is an isomorphism.
\end{mycompactenum}
\end{lemma}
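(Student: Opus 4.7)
The statement is a relative version of the classical fact that a right adjoint is fully faithful iff the counit is an isomorphism; the novelty is that fullness/faithfulness of $G$ is only required on the subcategory $\cB_0$, and condition $(*)$ is what allows us to recover that the counit is still an iso on $\cB_0$. The plan is simply to translate, via the adjunction $F\dashv G$, both statements into the same bijectivity statement about Hom-sets and exploit $(*)$ for one direction.

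For the implication (ii) $\Rightarrow$ (i), I would take $X,Y\in\Ob(\cB_0)$ and chase the canonical ``apply $G$'' map $\Hom_\cB(X,Y)\to\Hom_\cA(GX,GY)$. By the adjunction, the right-hand side is naturally isomorphic to $\Hom_\cB(FGX,Y)$, and under this isomorphism the map in question factors as
\[
\Hom_\cB(X,Y)\xra{(-)\circ\eps_X}\Hom_\cB(FGX,Y)\cong\Hom_\cA(GX,GY),
\]
where the first arrow is composition with the counit (this is the content of one of the triangle identities). Since (ii) asserts that $\eps_X$ is an isomorphism, the first arrow is a bijection, hence so is the composite; this gives fullness and faithfulness of $G_0$.

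For the harder implication (i) $\Rightarrow$ (ii), fix $X\in\Ob(\cB_0)$. By $(*)$, to show that $\eps_X\colon FG(X)\to X$ is an isomorphism in $\cB$ it is enough to verify that
\[
(-)\circ\eps_X\colon\Hom_\cB(X,Z)\longrightarrow\Hom_\cB(FG(X),Z)
\]
is a bijection for every $Z\in\Ob(\cB_0)$. Again the adjunction identifies $\Hom_\cB(FG(X),Z)$ with $\Hom_\cA(GX,GZ)$, and by the same triangle-identity diagram as above the composite
\[
\Hom_\cB(X,Z)\xra{(-)\circ\eps_X}\Hom_\cB(FG(X),Z)\cong\Hom_\cA(GX,GZ)
\]
coincides with the canonical map $f\mapsto G(f)$. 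Since $X,Z\in\Ob(\cB_0)$ and $G_0$ is fully faithful by (i), this last map is a bijection, hence so is $(-)\circ\eps_X$.

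No part of this is really an obstacle: the only thing to be careful about is the verification of the triangle-identity factorization, which is standard and can be referenced to \cite{ML_work} or \cite{Borceux}. The conceptual point to highlight is that assumption $(*)$ serves as a substitute for Yoneda's lemma restricted to $\cB_0$, which is precisely what is needed to upgrade ``$G_0$ fully faithful'' into ``$\eps_X$ iso for $X\in\Ob(\cB_0)$''; this mirrors the use of Lemma~\ref{lemma:semi_Yoneda} in the intended application, where $\cB_0$ will be $\bD^b(\cA)$ inside $\bD(\cA)$.
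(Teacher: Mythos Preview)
Your proof is correct and follows essentially the same approach as the paper: both rely on the triangle-identity factorization showing that $G_{X,Y}$ composed with the adjunction isomorphism equals $\Hom(\eps_X,Y)$, and both invoke $(*)$ precisely for the direction $\mathrm{(i)}\Rightarrow\mathrm{(ii)}$. The paper presents this more tersely as a single commutative diagram handling both implications simultaneously, whereas you unpack the two directions separately, but the content is the same.
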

\begin{proof}
For each $X,Y\in\Ob(\cB)$ we have the following commutative diagram:
\[
\xymatrix@C+10pt{
\Hom_\cB(X,Y) \ar[r]^(.45){G_{X,Y}} \ar@/_30pt/[rr]_{\Hom(\eps_X,Y)}
& \Hom_\cA(GX,GY) \ar[r]_{\displaystyle{\sim}}^{\text{adj.}}
& \Hom_\cB(FGX,Y)
}
\]
(where ``adj.'' is the adjunction isomorphism). Hence (i) holds if and only if
$\Hom(\eps_X,Y)$ is bijective for all $X,Y\in\Ob(\cB_0)$. By $\mathrm{(*)}$, this is equivalent to (ii).
\end{proof}

We now come to the following corrected version of \cite[Theorem 6.3]{Pir_qfree}.

\begin{theorem}
\label{thm:left_hom_epi}
Let $f\colon A\to B$ be an $R$-$S$-homomorphism from an $R$-$\Ptens$-algebra $A$
to an $S$-$\Ptens$-algebra $B$. The following conditions are equivalent:
\begin{mycompactenum}
\item
$\dD f^\bullet\colon\bD^b((B,S)\lmod)\to\bD^b((A,R)\lmod)$
is fully faithful;
\item
for every $X,Y\in\Ob((B,S)\lmod)$ and every $n\in\Z_+$
the canonical morphism
\[
\Ext^n_{B,S}(X,Y)\to\Ext^n_{A,R}(X,Y)
\]
is bijective;
\item
for every $X\in\Ob(\bD^b((B,S)\lmod))$ the canonical morphism
$$
B\Lptens{A}\dD f^\bullet(X)\to X
$$
is an isomorphism in $\bD^-((B,S)\lmod)$;
\item
$f$ is an epimorphism of $\Ptens$-algebras, and, moreover,
every $X\in\Ob((B,S)\lmod)$ viewed as an object of $(A,R)\lmod$ is acyclic relative to the functor
$$
B\ptens{A}(-)\colon (A,R)\lmod\to (B,S)\lmod.
$$
\end{mycompactenum}
\end{theorem}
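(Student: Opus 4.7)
The plan is to interpret all four conditions through the adjunction $F = B\Lptens{A}(-) \dashv G = \dD f^\bullet$ between $\bD^-((A,R)\lmod)$ and $\bD^-((B,S)\lmod)$, which is derived from the underived adjunction $B\ptens{A}(-)\dashv f^\bullet$. (The left functor is right exact and admits a total left derived functor because $(A,R)\lmod$ has enough projectives; the right functor is exact, since an $S$-split sequence remains split after restriction along $\eta_A$.) The counit $\eps_X\colon FG(X)\to X$ is precisely the canonical morphism in~(iii). I plan to prove (i)$\Leftrightarrow$(iii) via Lemma~\ref{lemma:emb}, to close the circle via (i)$\Rightarrow$(ii)$\Rightarrow$(iii), and to handle (iii)$\Leftrightarrow$(iv) by unpacking~(iii) on modules.

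For (i)$\Leftrightarrow$(iii) I apply Lemma~\ref{lemma:emb} to the adjunction above with $\cB_0=\bD^b((B,S)\lmod)$. Its only nontrivial hypothesis is~(*), which follows from Lemma~\ref{lemma:semi_Yoneda} applied to the exact category $(B,S)\lmod$, once I check that the latter has exact countable products. Countable products in $(B,S)\lmod$ are given by the ordinary product with the product topology, and a countable product of split exact sequences in any additive category is split exact (splittings combine componentwise), so a product of $S$-admissible sequences is $S$-admissible.

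For (i)$\Rightarrow$(ii) I use $\Ext^n_{B,S}(X,Y)=\Hom_{\bD((B,S)\lmod)}(X,Y[n])$ and apply fully faithfulness of $\dD f^\bullet$ to objects in degree zero. For (ii)$\Rightarrow$(iii) the derived adjunction gives, for $X,Y\in(B,S)\lmod$ and $n\in\Z_+$, a natural isomorphism
\[
\Hom_{\bD^-((B,S)\lmod)}(B\Lptens{A} X,Y[n])\cong \Ext^n_{A,R}(X,Y).
\]
By the triangle identity used in Lemma~\ref{lemma:emb}, the map $\eps_X^*\colon\Hom(X,Y[n])\to\Hom(B\Lptens{A} X,Y[n])$ corresponds under this isomorphism to the canonical restriction map $\Ext^n_{B,S}(X,Y)\to\Ext^n_{A,R}(X,Y)$, which~(ii) declares bijective. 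A dévissage over $\bD^b((B,S)\lmod)$ via truncation triangles and the five lemma then extends the bijection to $\Hom(X,Z)\to\Hom(B\Lptens{A} X,Z)$ for every $Z\in\bD^b((B,S)\lmod)$; Lemma~\ref{lemma:semi_Yoneda} forces $\eps_X$ itself to be an isomorphism when $X$ is a module, and another induction on the length of $X$ yields~(iii) on all of $\bD^b((B,S)\lmod)$.

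Finally, for (iii)$\Leftrightarrow$(iv): for $X\in(B,S)\lmod$, the isomorphism $B\Lptens{A} X\cong X$ in $\bD^-((B,S)\lmod)$ produces $B\ptens{A} X\cong X$ on $H^0$ and $\Tor_n^{A,R}(B,X)=0$ on $H^{-n}$ for $n\ge 1$, i.e.\ the $B\ptens{A}(-)$-acyclicity of~$X$; specialized to $X=B$ this gives $B\ptens{A} B\cong B$, the standard criterion for $f$ to be a $\Ptens$-algebra epimorphism. Conversely,~(iv) delivers $B\Lptens{A} X\cong B\ptens{A} X\cong X$ for each $B$-module $X$ (the second isomorphism using $B\ptens{A} B\cong B$), and a triangulated five-lemma induction on length extends the assertion to bounded complexes. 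The main technical obstacle is the two-fold dévissage in~(ii)$\Rightarrow$(iii), where the module-level Hom-bijection must be promoted to all of $\bD^b$ before Lemma~\ref{lemma:semi_Yoneda} can be invoked; this is also the reason~(iii) is stated for bounded rather than unbounded complexes, since no unbounded version of Yoneda's lemma is available in this setting.
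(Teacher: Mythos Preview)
Your proof is correct and rests on the same key step as the paper: the equivalence (i)$\Leftrightarrow$(iii) via the derived adjunction $B\Lptens{A}(-)\dashv\dD f^\bullet$ and Lemma~\ref{lemma:emb}, with hypothesis~$(*)$ supplied by Lemma~\ref{lemma:semi_Yoneda}. The paper closes the remaining implications more economically than you do: it cites \cite[Lemma~6.2]{Pir_qfree} for (i)$\Leftrightarrow$(ii) outright, and for (iv)$\Rightarrow$(ii) it simply observes that if $P_\bullet\to X$ is a projective resolution in $(A,R)\lmod$ then, by acyclicity and the epimorphism condition, $B\ptens{A}P_\bullet\to X$ is a projective resolution in $(B,S)\lmod$, whence $\Hom_A(P_\bullet,Y)\cong\Hom_B(B\ptens{A}P_\bullet,Y)$ computes both $\Ext$'s at once. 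This bypasses both your d\'evissage for (ii)$\Rightarrow$(iii) and your length induction for (iv)$\Rightarrow$(iii), though your arguments are valid and have the merit of being self-contained.

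One small imprecision in your (iii)$\Rightarrow$(iv): $H^0(B\Lptens{A}X)$ is $\Tor_0^{A,R}(B,X)$, a possibly non-Hausdorff locally convex space whose \emph{completion} is $B\ptens{A}X$, so ``$B\ptens{A}X\cong X$ on $H^0$'' is not literally what $H^0$ delivers. The paper inserts the missing sentence: the composite $\Tor_0^{A,R}(B,X)\to B\ptens{A}X\to X$ is an isomorphism in $\LCS$, and since the first arrow is the canonical map to the completion, both arrows are isomorphisms.
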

\begin{proof}
$\mathrm{(i)}\Longleftrightarrow\mathrm{(ii)}$.
This is a special case of \cite[Lemma 6.2]{Pir_qfree}.

$\mathrm{(i)}\Longleftrightarrow\mathrm{(iii)}$.
Let $\cA=\bD^-((A,R)\lmod)$, $\cB=\bD^-((B,S)\lmod)$, and $\cB_0=\bD^b((B,S)\lmod)$.
By Lemma~\ref{lemma:semi_Yoneda}, $\cB$ and $\cB_0$ satisfy condition $\mathrm{(*)}$
of Lemma~\ref{lemma:emb}. Let now
\[
F=B\Lptens{A}(-)\colon \cA \to \cB,\qquad
G=\dD f^\bullet\colon \cB \to \cA.
\]
By \cite[Lemma 13.6]{DCU}, $(F,G)$ is an adjoint pair, with $F$ left adjoint to $G$.
Applying now Lemma~\ref{lemma:emb}, we see that
$\mathrm{(i)}\Longleftrightarrow\mathrm{(iii)}$.

$\mathrm{(iii)}\Longrightarrow\mathrm{(iv)}$.
By (iii), for every $X\in\Ob((B,S)\lmod)$ the morphism
$B\Lptens{A} X\to X$ is an isomorphism in $\bD^-((B,S)\lmod)$ and
therefore in $\bD^-(\LCS)$. Applying the functor $H^0$, we see that the
composition of the canonical morphisms
$$
\Tor_0^{A,R}(B,X)\to B\ptens{A} X \to X
$$
is an isomorphism in $\LCS$.
Since the first morphism in this composition is the canonical map from a
locally convex space to its completion
(see \cite[Section 2]{Pir_qfree}), we conclude that both morphisms are isomorphisms in $\LCS$.
Hence $B\ptens{A} X\to X$ is an isomorphism in $B\lmod$. By composing
the isomorphisms $B\Lptens{A} X\cong X$ and $X\cong B\ptens{A} X$,
we conclude that $X$ is $B\ptens{A}(-)$-acyclic.
Finally, the canonical isomorphism $B\ptens{A} B\cong B$ means precisely
that $f$ is a $\Ptens$-algebra epimorphism \cite[Prop. 6.1]{Pir_qfree}.

$\mathrm{(iv)}\Longrightarrow\mathrm{(ii)}$.
Let $P_\bullet$ be a projective resolution of
$X$ in $(A,R)\lmod$. By (iv), $B\ptens{A} P_\bullet$ is a projective resolution of
$X$ in $(B,S)\lmod$. Therefore
\[
\Ext^n_{A,R}(X,Y) \cong H^n(\Hom_A(P_\bullet,Y)) \cong
H^n(\Hom_B(B\ptens{A}P_\bullet,Y)) \cong
\Ext^n_{B,S}(X,Y). \qedhere
\]
\end{proof}

\begin{definition}
If $f\colon A\to B$ satisfies the conditions of Theorem~\ref{thm:left_hom_epi}, then
we say that $f$ is a {\em left relative homological epimorphism}.
We say that $f$ is a {\em right relative homological epimorphism}
if $f\colon A^\op\to B^\op$ is a left relative homological epimorphism.
\end{definition}

\begin{remark}
In \cite[Theorem 6.3]{Pir_qfree}, which is the original version of Theorem~\ref{thm:left_hom_epi},
the morphism $B\Lptens{A}\dD f^\bullet(X)\to X$ (see (iii)) is claimed to be
a morphism in $\bD^b((B,S)\lmod)$. However, $B\Lptens{A}(-)$ does not take
$\bD^b((A,R)\lmod)$ to $\bD^b((B,S)\lmod)$ {\em a priori} (unless we impose some homological
finiteness conditions on $A$ or $B$). This resulted in a gap
in the proof of equivalence $\mathrm{(i)}\Longleftrightarrow\mathrm{(iii)}$,
because the functors $B\Lptens{A}(-)$ and $\dD f^\bullet$ cannot be viewed as
adjoint functors between the respective bounded derived categories, and so
\cite[IV.3, Theorem 1]{ML_work} does not apply. That is why we had to use
Lemmas~\ref{lemma:semi_Yoneda} and~\ref{lemma:emb} instead.
\end{remark}

\end{document}